\documentclass{amsart}

\usepackage{amssymb} \usepackage{amsfonts} \usepackage{amsmath}
\usepackage{amsthm} \usepackage{epsfig} 
\usepackage{color}
\usepackage{pinlabel}
\usepackage{amscd}
\usepackage[all]{xy}
\usepackage{pinlabel}
\usepackage{tcolorbox}

% includi definizioni

\newtheorem{lemma}{Lemma}%[section]
\newtheorem{thm}[lemma]{Theorem}
\newtheorem{prop}[lemma]{Proposition}
\newtheorem{cor}[lemma]{Corollary}

\theoremstyle{definition}

\newtheorem{example}[lemma]{Example}

\theoremstyle{remark}
\newtheorem{rem}[lemma]{Remark}

%DIMENSIONI della pagina
%\voffset=-1.5cm \textheight=23cm \hoffset=-.5cm \textwidth=16cm
%\oddsidemargin=1cm \evensidemargin=-.1cm
\footskip=35pt \linespread{1.20}

\newcommand{\matR} {\ensuremath {\mathbb{R}}}
\newcommand{\matQ} {\ensuremath {\mathbb{Q}}}
\newcommand{\matZ} {\ensuremath {\mathbb{Z}}}

\newcommand{\matH} {\ensuremath {\mathbb{H}}}

\newcommand{\SO} {\ensuremath {{\rm SO}}}
\newcommand{\GL} {\ensuremath {{\rm GL}}}

\newcommand{\Or} {\ensuremath {{\rm O}}}

\DeclareGraphicsRule{.pdftex}{pdf}{.pdftex}{}

\author{Bruno Martelli}
\address{\newline
Dipartimento di Matematica,\newline
Largo Pontecorvo 5,\newline
56127 Pisa, Italy.}
\email{bruno.martelli@unipi.it}

\author{Alan W. Reid}
\address{\newline
Department of Mathematics,\newline
Rice University,\newline
Houston, TX 77005, USA.}
\email{alan.reid@rice.edu}

\title{The Dirac operator \\ on cusped hyperbolic manifolds}

\begin{document}

\begin{abstract}
We study how the spin structures on finite-volume hyperbolic $n$-manifolds restrict to cusps. When a cusp cross-section is a $(n-1)$-torus, there are essentially two possible behaviours: the spin structure is either \emph{bounding} or \emph{Lie}. We show that in every dimension $n$ there are examples where at least one cusp is Lie, and in every dimension $n\leq 8$ there are examples where all the cusps are bounding.  

By work of C. B\"ar, this implies that the spectrum of the Dirac operator is $\mathbb R$ in the first case, and discrete in the second.
We therefore deduce that there are cusped hyperbolic manifolds whose spectrum of the Dirac operator is $\mathbb R$ in all dimensions, and whose spectrum is discrete in all dimensions $n\leq 8$. 
\end{abstract}

\maketitle

\section*{Introduction} \label{introduction:section}

Let $M$ be a finite-volume hyperbolic $n$-manifold all of whose cusp cross-sections are Euclidean $(n-1)$-tori.
%only toric cusps, \emph{i.e.}~whose cusp sections are Euclidean $(n-1)$-tori. 
If $M$ has a spin structure, it induces one on every orientable codimension-1 submanifold, and hence in particular on every $(n-1)$-torus cusp cross-section. Recall that there are two spin structures on a $(n-1)$-torus $T$ up to automorphisms of $T$ (see \cite{B} for example), namely:
\begin{itemize}
\item the \emph{bounding} spin structure, induced by the representation of $T$ as the boundary of $D^2 \times S^1 \times \cdots \times S^1$ equipped with any spin structure,
\item the \emph{Lie} spin structure, induced by the representation of $T$ as a Lie group.
\end{itemize}

%In fact the $2^{n-1}$ spin structures of $T$ form an affine set over $H^1(T, \matZ_2)$ with a preferred basepoint given by the Lie structure, so this affine set can be identified with $H^1(T, \matZ_2)$ itself. The automorphism group of $T$ acts transitively on all the non-zero elements of $H^1(T, \matZ_2)$, which represent the bounding spin structures. %So these are all equivalent up to automorphisms of $T$.

These spin structures are sometimes called \emph{non-trivial} and \emph{trivial}, respectively.
Our interest in the possible restrictions of spin structures to cusp cross-sections is motivated by the following theorem of B\"ar \cite{B}:

\begin{thm}[B\"ar] \label{Bar:thm}
Let $M$ be a finite volume hyperbolic $n$-manifold equipped with a spin structure all of whose cusp cross-sections are Euclidean $(n-1)$-tori. Then the spectrum of the Dirac operator of $M$ is:
\begin{enumerate}
\item $\matR$ if at least one cusp cross-section inherits the Lie spin structure;
\item discrete if all the cusp cross-sections inherit the bounding spin structure.
\end{enumerate}
\end{thm}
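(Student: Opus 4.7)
The plan is to localize the question to the cusp ends of $M$. Decompose $M$ as a compact core $M_0$ together with finitely many cusp ends, each isometric to $T \times [0,\infty)$ with warped-product hyperbolic metric $dt^2 + e^{-2t} g_T$. Since $M_0$ is compact and the Dirac operator is elliptic, the essential spectrum of $D$, and indeed whether the spectrum of $D$ is discrete or the whole real line, is controlled entirely by the asymptotic behaviour of $D$ on the cusps.

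On each cusp I would write the Dirac operator in its standard warped-product form and, after the gauge transformation $\psi \mapsto e^{(n-1)t/2}\psi$, reduce to the simpler model operator $\tilde{D} = c(\partial_t)\partial_t + e^{t} D_T$ on $L^2\bigl(T \times [0,\infty),\, dt \cdot d\Vol_T\bigr)$. Here $c(\partial_t)$ denotes Clifford multiplication by $\partial_t$ and $D_T$ is the Dirac operator of the cross-section $T$. Decomposing spinors along the spectral resolution of $D_T$, and pairing each eigenspinor $\phi_\mu$ with $c(\partial_t)\phi_\mu$ (which is an eigenspinor of eigenvalue $-\mu$, since $c(\partial_t)$ anticommutes with $D_T$), each eigenvalue $\mu$ produces a $2 \times 2$ first-order radial ODE system whose coefficient matrix has eigenvalues $\pm\sqrt{e^{2t}\mu^2 - \lambda^2}$. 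The qualitative behaviour at $t \to \infty$ depends sharply on whether $\mu = 0$ is attained.

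The \emph{Lie case} is precisely when $\ker D_T \neq 0$: the Lie spin structure on the flat torus $T$ admits parallel spinors, giving $0 \in \mathrm{spec}(D_T)$. On the $\mu = 0$ modes the reduced system is asymptotically $c(\partial_t)\partial_t - \lambda$, with bounded oscillatory solutions, and cutting these off by bump functions $\chi_n$ supported on intervals $[a_n, b_n]$ of increasing length produces a Weyl sequence for $D - \lambda$ for every $\lambda \in \matR$: the signal norm grows like $\sqrt{b_n - a_n}$ while the error depends only on $\chi_n'$ and remains bounded, so their ratio tends to zero. This forces $\matR \subseteq \mathrm{spec}(D)$, hence equality.

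The \emph{bounding case} is when $D_T$ is invertible, so $|\mu| \geq \delta > 0$ on every mode. For such modes the coefficient matrix grows super-exponentially and every solution of the radial ODE is either super-exponentially growing or super-exponentially decaying; no Weyl sequence can be constructed from them. A Persson/Agmon-type argument then shows that each reduced radial operator has purely discrete spectrum, and summing over the (discrete) spectrum of $D_T$ yields discrete spectrum on each cusp, and hence on all of $M$. The main obstacle, in my view, is the careful ODE analysis: verifying essential self-adjointness on these noncompact ends, controlling the cutoff errors in the Weyl sequence construction in the Lie case, and ruling out approximate eigenfunctions in the bounding case uniformly across all modes of $D_T$.
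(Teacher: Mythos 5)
The paper does not prove this statement: it is quoted as a theorem of B\"ar, with the reader referred to~[B] for both the definition of the Dirac operator and the proof. Your sketch correctly reconstructs the strategy that B\"ar actually uses---warped-product reduction of the cusp operator to $c(\partial_t)\partial_t + e^t D_T$ after a gauge conjugation absorbing the mean-curvature term, mode-by-mode decomposition of spinors along $\mathrm{spec}(D_T)$ by pairing $\phi_\mu$ with $c(\partial_t)\phi_\mu$, Weyl sequences supported on long intervals built from the oscillatory $\mu=0$ solutions (which exist exactly when the cusp torus carries the Lie spin structure, the unique spin structure on a flat $(n-1)$-torus with $\ker D_T \neq 0$), and, in the bounding case, the super-exponential dichotomy of the $\mu\neq 0$ modes, which via a lower bound of the form $D^2 \geq -\partial_t^2 + e^{2t}D_T^2 - e^t|D_T|$ forces empty essential spectrum coming from the cusps. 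The technical ingredients you flag as the ``main obstacle''---essential self-adjointness on the ends, uniformity of the $|\mu|\geq\delta$ estimate, and the Glazman/Persson decomposition principle localizing the essential spectrum to the ends---are exactly the content worked out in detail in~[B, \S 3]; granting those, your outline is sound and is the same argument.
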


We will not define the Dirac operator here and refer the reader to \cite{B}. By explicit constructions, it is shown in \cite{B}  that both cases of Theorem \ref{Bar:thm} occur in dimensions $n=2,3$. Although it is known in dimensions $n\geq 4$,  that there are many cusped hyperbolic $n$-manifolds admitting a spin structure (indeed every finite-volume hyperbolic manifold is virtually spinnable \cite{LR, Su}), we are not aware of any example where the spin structure is constructed explicitly and its restriction to the cusp cross-sections determined. In particular it seems unknown whether cases (1) or (2) of Theorem \ref{Bar:thm} occur in dimension $n\geq 4$. The main purpose of this article is to provide a partial answer.

\begin{thm} \label{main:thm}
The following hold:
\begin{enumerate}
\item
For every integer $n\geq 2$ there is a finite volume cusped orientable hyperbolic $n$-manifold equipped with a spin structure,  all of whose cusp cross-sections are Euclidean $(n-1)$-tori, and 
such that the spin structure restricts to the Lie spin structure on at least one cusp cross-section;
\item for every integer $n\leq 8$ there is a finite volume cusped orientable hyperbolic $n$-manifold equipped with a spin structure, all of whose cusp cross-sections are Euclidean $(n-1)$-tori, and
such that the spin structure restricts to the bounding structure on all cusp cross-sections.
\end{enumerate}
\end{thm}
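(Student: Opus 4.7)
The plan is to analyze the spin structure on $M = \matH^n / \Gamma$ through a lift of the holonomy $\Gamma \hookrightarrow \SO^+(n,1)$ to $\mathrm{Spin}^+(n,1)$. For a toroidal cusp with peripheral subgroup $\Lambda \cong \matZ^{n-1}$ consisting of parabolic translations of a horosphere, the induced structure on the cross-section $T^{n-1}$ is Lie precisely when every translation generator of $\Lambda$ lifts to a pure translation in $\mathrm{Spin}^+(n,1)$; otherwise it is bounding. This reduces the theorem to a concrete spin-lift calculation on peripheral subgroups in explicit cusped manifolds.

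For part (1), I would start with a known finite-volume cusped hyperbolic $n$-manifold with toroidal cusps (for instance an arithmetic quotient coming from an integral orthogonal group $\SO(f;\matZ)$ of signature $(n,1)$, or a manifold obtained by face-pairings of an ideal Coxeter polytope) and pass to a torsion-free spin cover by \cite{LR, Su}. The resulting manifold carries an affine family of spin structures indexed by $H^1(M; \matZ/2)$. By choosing the spin structure appropriately, or by passing to a further cover to enlarge the image of the restriction $H^1(M;\matZ/2)\to H^1(T^{n-1};\matZ/2)$, one can force at least one cusp to inherit the Lie structure. The base cases $n = 2, 3$ in \cite{B} serve as a template, and the dimension-by-dimension construction is essentially a matter of tracking the spin lift along one chosen parabolic.

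For part (2), the restriction $n \leq 8$ strongly suggests reliance on a specific family of explicit manifolds available only in this range, most naturally cusped hyperbolic manifolds built by gluing ideal right-angled Coxeter polytopes, whose existence in $\matH^n$ is known in each dimension up to this cap. For each $n \in \{2,\dots,8\}$ I would exhibit a concrete spin manifold and check by direct computation with the reflection generators that every peripheral lift to $\mathrm{Spin}^+(n,1)$ acquires a nontrivial twist. Symmetric constructions where all cusps are isometric and commensurable help propagate the bounding property from one cusp to all of them simultaneously.

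The main obstacle is part (2): arranging a \emph{single} cusp to be bounding is easy via a choice of spin structure, but enforcing this on \emph{all} cusps at once requires either high symmetry of the underlying polyhedral manifold (so that the computation at one cusp suffices for all) or a delicate modification by finite covers to remove any surviving Lie cusps without introducing new ones. The dimension cap $n \leq 8$ is most likely an artifact of the range in which a suitable symmetric polyhedral source is available, rather than a genuine topological obstruction, and the proof is expected to be an exhibition of one example per dimension together with the verification that all peripheral spin lifts are nontrivial.
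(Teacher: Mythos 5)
Your proposal identifies some of the right raw materials (arithmetic lattices for part (1), right-angled Coxeter polytopes for the range $n\leq 8$ in part (2)) but in both parts the mechanism that actually makes the argument close is missing.

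For part (1), you say one should ``pass to a further cover to enlarge the image of the restriction $H^1(M;\matZ/2)\to H^1(T^{n-1};\matZ/2)$,'' which is indeed the goal, but you offer no reason such a cover should exist. The paper's argument supplies this via a nontrivial theorem: non-cocompact arithmetic lattices are virtually special (Bergeron--Haglund--Wise), hence virtually retract onto their geometrically finite subgroups, and in particular onto a peripheral $\matZ^{n-1}$. The retraction $\pi_1(M)\to\pi_1(T)$ gives a direct-sum splitting $H_1(M,\matZ)=H_1(T,\matZ)\oplus A$, which is exactly what forces $H^1(M;\matZ/2)\to H^1(T;\matZ/2)$ to be onto. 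Without this input there is no reason for the restriction map to become surjective in a cover; generic finite covers need not help.

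For part (2), the proposed direct computation of peripheral spin lifts in $\mathrm{Spin}^+(n,1)$ for reflection generators, cusp by cusp, is not what the paper does and is unlikely to be tractable: the manifolds involved have an astronomically large number of cusps (the $n=8$ example has on the order of $10^{71}$), and even controlling one peripheral subgroup explicitly would require carrying around an enormous amount of Coxeter data. The paper's route is entirely different and avoids all lift computations. It observes that a spin structure on a cusped $M$ restricts to the \emph{bounding} structure on every cusp cross-section if and only if $M$ admits a spinnable closed Dehn filling $N$ (i.e.\ $M$ is the complement of some codimension-2 tori in a closed spin $n$-manifold). It then Dehn-fills the ideal vertices of the right-angled polytope $P^n$ to an abstract simple polytope $\bar P^n$, applies the colouring construction to obtain a closed manifold $\bar M^n$ that fills $M^n$, recognizes $\bar M^n$ as a real moment-angle manifold $\matR Z_K$ over a sphere $K$, and invokes the Hasui--Kishimoto--Kizu theorem that all Stiefel--Whitney classes of such $\matR Z_K$ vanish. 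The restriction $n\leq 8$ is, as you guessed, tied to the existence of the finite-volume right-angled Coxeter source $P^n\subset\matH^n$, but the heavy lifting is the moment-angle vanishing theorem, not a symmetry or lift argument.
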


Combining this result with B\"ar's Theorem we get:

\begin{cor}
\label{cor:dirac}
The following hold:
\begin{enumerate}
\item
For every $n$ there is a finite volume cusped orientable hyperbolic $n$-manifold with a spin structure for which the spectrum of the Dirac operator is $\matR$;
\item 
for every $n\leq 8$ there is a finite volume cusped orientable hyperbolic $n$-manifold with a spin structure for which the spectrum of the Dirac operator is discrete.
\end{enumerate}
\end{cor}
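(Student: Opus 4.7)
The plan is to derive the corollary as an immediate combination of Theorem \ref{main:thm} with B\"ar's Theorem \ref{Bar:thm}; no additional analytic or geometric input is needed. For part (1), I would apply Theorem \ref{main:thm}(1) to produce, for each integer $n\geq 2$, a finite-volume cusped orientable hyperbolic $n$-manifold $M$ equipped with a spin structure whose cusp cross-sections are Euclidean $(n-1)$-tori, at least one of which carries the Lie spin structure. Invoking Theorem \ref{Bar:thm}(1) then directly yields that the spectrum of the Dirac operator on $M$ equals $\matR$.

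For part (2), given an integer $2 \leq n \leq 8$, I would apply Theorem \ref{main:thm}(2) to obtain a finite-volume cusped orientable hyperbolic $n$-manifold $M$ carrying a spin structure such that every cusp cross-section is an Euclidean $(n-1)$-torus with the bounding spin structure. Theorem \ref{Bar:thm}(2) then gives that the spectrum of the Dirac operator on $M$ is discrete.

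There is no genuine obstacle in this corollary: it is a purely formal consequence, and the entire difficulty is concentrated in the proof of Theorem \ref{main:thm}, i.e.\ in explicitly exhibiting spin cusped hyperbolic manifolds together with control on the induced spin structure on every torus cusp cross-section. The only care required is to match conventions: one must check that the hypothesis of Theorem \ref{Bar:thm} (finite volume, spin, torus cross-sections) is exactly what Theorem \ref{main:thm} delivers, which is built into the statement of the latter.
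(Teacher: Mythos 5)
Your proposal is correct and matches the paper exactly: the corollary is stated in the paper as an immediate consequence of combining Theorem \ref{main:thm} with B\"ar's Theorem \ref{Bar:thm}, with no further argument given. Your observation that all the work lies in Theorem \ref{main:thm} and that the hypotheses align by construction is precisely the intended reading.
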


The constructions of examples in (1) and (2) of Theorem \ref{main:thm} are very different in nature, and neither is completely straightforward, relying on results that have been proved recently. 

\subsection*{Acknowledgments} We thank Francesco Lin for introducing us to this problem. This work was initiated when the second author was visiting the Max-Planck-Institut f\"ur Mathematik, and he gratefully thanks them for their support and hospitality.

\section{The Lie group case}
\label{Lie}

\subsection{Preliminaries}
\label{Prelim} 
It will be convenient for this part of the discussion to identify hyperbolic $n$-space $\matH^n$, with the hyperboloid model, defined using the quadratic form $j_n:=x_0^2+x_1^2+\ldots x_{n-1}^2-x_n^2$; i.e.
$${\mathbb H}^n = \{x=(x_0,x_1,\ldots, x_n) \in {\mathbb R}^{n+1} : j_n(x)= -1, x_{n}>0\}$$
equipped with the Riemannian metric induced from the Lorentzian inner product associated to $j_n$. The full group of isometries of ${\mathbb H}^n$ is then identified with $\Or^+(n,1)$, the subgroup of
\[
\Or(n,1) = \{A \in \GL(n+1,\mathbb{R}) : A^tJ_nA = J_n\},
\]
preserving the upper sheet of the hyperboloid $j_n(x)=-1$, and
where $J_n$ is the symmetric matrix associated to the quadratic form $j_n$. The full group of orientation-preserving isometries is given by $\SO^+(n,1) = \{A\in \Or^+(n,1) : \det(A)= 1\}$.

If $M$ is a finite volume hyperbolic manifold admitting a spin structure (i.e. the first and second Stiefel Whitney classes are both zero), then the set of spin structures on $M$ can be identified with $H^1(M,\matZ/2\matZ)$. This identification is not canonical, although it can be made canonical after choosing a ``base" spin structure corresponding to $0\in H^1(M,\matZ/2\matZ)$.  Moreover, $H^1(M,\matZ/2\matZ)$ acts freely and transitively on the set of spin structures.

We will make use of the following lemma (c.f. \cite[Lemma 3]{B}).

\begin{lemma}
\label{get_trivial}
Let $M$ be a finite volume hyperbolic $n$-manifold admitting a spin structure, all of whose cusp cross-sections are Euclidean $(n-1)$-tori, and assume that for some cusp cross-section $T$, the inclusion map $T\hookrightarrow M$ induces a direct sum decomposition, 
$H_1(M,\matZ)=H_1(T,\matZ)\oplus A$ for some finitely generated Abelian group $A$. Then given any spin structure $\sigma$ on $T$, there exists a spin structure $s_\sigma$ on $M$ such that $s_\sigma$ restricts to $\sigma$ on $T$.
In particular this holds for the Lie spin structure on $T$.\end{lemma}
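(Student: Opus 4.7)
The plan is to leverage two facts already stated in the text: that the set of spin structures on a space $X$ (when non-empty) is a torsor over $H^1(X;\matZ/2\matZ)$, and that this torsor structure is natural under pullback. Explicitly, if $i\colon T\hookrightarrow M$ denotes the inclusion and $s$ is any spin structure on $M$, then $(s+\alpha)|_T = s|_T + i^*(\alpha)$ for every $\alpha\in H^1(M;\matZ/2\matZ)$. Fix an arbitrary spin structure $s_0$ on $M$ (which exists by hypothesis) and let $\sigma_0 := s_0|_T$. As $s$ ranges over all spin structures on $M$, its restriction $s|_T$ ranges over the translates of $\sigma_0$ by elements in the image of the pullback map $i^*\colon H^1(M;\matZ/2\matZ)\to H^1(T;\matZ/2\matZ)$. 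The conclusion thus reduces to showing that $i^*$ is surjective.

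To establish this I would appeal to the universal coefficient theorem. Since $\matZ/2\matZ$ is a field and $H_0$ is free, the canonical evaluation map $H^1(X;\matZ/2\matZ)\to \mathrm{Hom}(H_1(X;\matZ),\matZ/2\matZ)$ is an isomorphism, natural in $X$. Under this identification $i^*$ becomes the dual of the inclusion-induced map $i_*\colon H_1(T;\matZ)\to H_1(M;\matZ)$. The hypothesis says that $i_*$ is split injective, identifying $H_1(T;\matZ)$ with a direct summand of $H_1(M;\matZ) \cong H_1(T;\matZ)\oplus A$. Any homomorphism $H_1(T;\matZ)\to\matZ/2\matZ$ therefore extends to a homomorphism on $H_1(M;\matZ)$ by setting it to be zero on $A$, which yields the desired surjectivity.

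Combining the two steps, for any spin structure $\sigma$ on $T$ one writes $\sigma=\sigma_0+i^*(\alpha)$ for some $\alpha\in H^1(M;\matZ/2\matZ)$, and then $s_\sigma := s_0 + \alpha$ is a spin structure on $M$ whose restriction to $T$ equals $\sigma$. The particular case of the Lie spin structure on $T$ is then immediate, since it is merely one of the two spin structures on the torus $T$ described in the introduction.

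The main obstacle is essentially bookkeeping: one needs to take for granted the naturality of the $H^1(-;\matZ/2\matZ)$-torsor structure on spin structures (which can be seen from the double-cover-of-the-frame-bundle description, together with the triviality of the normal bundle of a cusp cross-section), since this naturality is only implicit in the identifications used in the introduction. Once this point is accepted, the proof is a short piece of homological algebra, and the geometry of $M$ enters only through the existence of the base spin structure $s_0$ and the direct sum hypothesis.
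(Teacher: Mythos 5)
Your proof is correct and follows essentially the same route as the paper: both reduce the problem to the surjectivity of $i^*\colon H^1(M;\matZ/2\matZ)\to H^1(T;\matZ/2\matZ)$, which follows from the direct-sum hypothesis, and both then use the torsor structure of spin structures over $H^1(-;\matZ/2\matZ)$ to conclude. You have merely made explicit (via universal coefficients and naturality of the torsor action) two steps that the paper leaves implicit.
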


\begin{proof} Since $H_1(T,\matZ)$ is a direct summand of $H_1(M,\matZ)$, it follows that $H^1(M,\matZ/2\matZ)$ surjects on $H^1(T,\matZ/2\matZ)$. So given any spin structure $\sigma$ on $T$ we can find a spin structure $s_\sigma$ on $M$ such that $s_\sigma$ restricts to $\sigma$ on $T$.\end{proof}

\subsection{Cusped arithmetic hyperbolic manifolds}
\label{arithmetic}
The proof of Theorem \ref{main:thm}(1) will make use of cusped arithmetic, hyperbolic $n$-manifolds whose definition we now recall (see \cite{VS} for further details). 
Suppose that $X = \mathbb{H}^n/\Gamma$ is a finite volume cusped hyperbolic $n$-manifold. Then $X$ is arithmetic if $\Gamma$ is commensurable with a group $\Lambda < \SO^+(n,1)$ as described below.

Let $f$ be a non-degenerate quadratic form defined over $\matQ$ of signature $(n,1)$, which we can assume is diagonal and has integer coefficients. Then $f$ is equivalent over $\mathbb{R}$ to the form $j_n$ defined above; i.e. there exists $T\in \GL(n+1,\mathbb{R})$ such that $T^tFT = J_n$, where $F$ and $J_n$ denote
the symmetric matrices associated to $f$ and $j_n$ respectively. Then $T^{-1}\SO(f,\matZ)T\cap \SO^+(n,1)$ defines the arithmetic subgroup $\Lambda < \SO^+(n,1)$.

Note that the form $f$ is anisotropic (ie does not represent $0$ non-trivially over $\matQ$) if and only if the group $\Gamma$ is cocompact, otherwise the group $\Gamma$ is non-cocompact (see \cite{BHCh}).  By Meyer's Theorem  
\cite[\S IV.3.2, Corollary 2]{Se}, the case that $f$ is anisotropic can only occur when $n=2,3$.

The main result we will need from the theory of cusped arithmetic hyperbolic manifolds is that their fundamental groups are virtually special \cite{BHW}. We will not define this here, but we will prove a result which is a consequence of 
being virtually special.

\begin{thm}
\label{thm:BHW}
Let $\matH^n/\Lambda$ be a cusped arithmetic hyperbolic $n$-manifold.  Then $\Lambda$ has a finite index subgroup $\Gamma$ for which $\matH^n/\Gamma$ has all cusp cross-sections being Euclidean $(n-1)$-tori and for one such
cusp cross-section $T$, there exists a retraction $\Gamma\rightarrow \pi_1(T)$.\end{thm}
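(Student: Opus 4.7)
The plan is to combine two ingredients that are, by now, standard in the theory of arithmetic hyperbolic groups. First, I pass to a torsion-free finite-index subgroup of $\Lambda$ all of whose peripheral subgroups are free abelian of rank $n-1$. Second, I use the fact that $\Lambda$ is virtually compact special together with the corresponding virtual retract theorem applied to a single cusp subgroup. The statement of the lemma in this form is tailor-made to fit the hypothesis of Lemma \ref{get_trivial}, since a retraction $\Gamma \twoheadrightarrow \pi_1(T)$ descends to a retraction on abelianisations and therefore produces a direct summand decomposition of $H_1(\matH^n/\Gamma,\matZ)$.

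Step 1. Starting from $\Lambda$, I would first produce a finite-index subgroup $\Lambda_0 \leq \Lambda$ which is torsion-free and such that every maximal parabolic subgroup of $\Lambda_0$ is a free abelian group of rank $n-1$. For an arithmetic lattice this is standard: take a principal congruence subgroup modulo a sufficiently deep ideal; Selberg's lemma makes it torsion-free, and the congruence condition kills the finite holonomy of each cusp. Consequently all cusp cross-sections of $\matH^n/\Lambda_0$ are Euclidean $(n-1)$-tori.

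Step 2. Fix one cusp of $\matH^n/\Lambda_0$ with cross-section $T$, and let $P = \pi_1(T) \cong \matZ^{n-1}$ be the corresponding maximal parabolic subgroup of $\Lambda_0$. Since $\Lambda_0$ is hyperbolic relative to its cusp subgroups, $P$ is a (fully) relatively quasi-convex subgroup. By \cite{BHW}, the group $\Lambda_0$ is virtually compact special, and in this setting the virtual retract theorem of Haglund--Wise (in its relatively hyperbolic formulation) applies: there exists a finite-index subgroup $\Gamma \leq \Lambda_0$ containing $P$ together with a group retraction $\rho \colon \Gamma \twoheadrightarrow P$ (i.e.\ $\rho|_P = \id_P$).

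Step 3. Because $\Gamma$ has finite index in $\Lambda_0$, every cusp cross-section of $\matH^n/\Gamma$ is a finite cover of a cusp cross-section of $\matH^n/\Lambda_0$, hence is still an $(n-1)$-torus. The cusp subgroup of $\Gamma$ corresponding to $T$ equals $P$, so the retraction $\rho$ is exactly the required map $\Gamma \twoheadrightarrow \pi_1(T)$, completing the proof.

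The main obstacle is Step 2, namely ensuring that the virtual specialness result from \cite{BHW} can indeed be combined with the Haglund--Wise virtual retract machinery in the relative setting so as to yield a retraction onto a maximal parabolic subgroup. Once this black box is invoked, Steps 1 and 3 are formal. An alternative route, which avoids quoting the relative version directly, would be to use the Groves--Manning--Agol--Wise Dehn filling theory to reduce to the compact hyperbolic case and then apply Haglund--Wise to a quasi-convex subgroup of a compact special cube complex; but quoting the relative statement of BHW is much cleaner.
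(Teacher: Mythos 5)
Your proposal is essentially identical to the paper's own proof: both first pass to a finite cover with torus cusp cross-sections (the paper cites \cite{MRS}, you invoke deep congruence subgroups together with Selberg/Minkowski --- both standard justifications of the same fact), and both then invoke the Bergeron--Haglund--Wise result that non-cocompact arithmetic lattices virtually retract onto their geometrically finite subgroups, applied to a peripheral $\matZ^{n-1}$. The only cosmetic difference is that the paper passes to a further finite cover $T \to T_1$ of the cusp cross-section and retracts onto $\pi_1(T)$, whereas you arrange for the finite-index subgroup $\Gamma$ to contain the original peripheral group $P$ exactly; these are equivalent for the application.
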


\begin{proof} We indicate how this follows from \cite{BHW}. By \cite{MRS} for example, we can first pass to a finite cover $M_1$ of $\matH^n/\Lambda$ so that all cusp cross-sections are Euclidean $(n-1)$-tori. Next, we use
\cite[Theorem 1.4]{BHW} together with the last sentence of \cite{BHW} which states: 

\smallskip

``{\em In fact, the proof of Theorem 1.4 extends to show that non-cocompact arithmetic lattices virtually retract onto their geometrically finite subgroups.}"

\smallskip

\noindent in the following way  (note that \cite[Theorem 1.4]{BHW} deals with certain closed arithmetic hyperbolic manifolds).

Let $T_1\subset M_1$ be a cusp cross-section, since $\pi_1(T_1)$ is geometrically finite, we can use the previous paragraph to arrange finite covers $M=\matH^n/\Gamma \rightarrow M_1$ and $T\rightarrow T_1$ (and $\pi_1(T)$ geometrically finite) together with a retraction from $\Gamma\rightarrow \pi_1(T)$. \end{proof}

\subsection{The proof of Theorem \ref{main:thm}(1)}
Let $X$ be a cusped arithmetic hyperbolic manifold of dimension $n$ (which we can assume is at least $4$ by the results of \cite{B}).  Using Sullivan's result \cite{Su} we can pass to a finite cover that admits a spin structure and also
(as noted in the proof of Theorem \ref{thm:BHW}), so that all cusp cross-sections are Euclidean $(n-1)$-tori. From Theorem \ref{thm:BHW},  we can pass to a further finite sheeted cover $M\rightarrow X$, that admits a spin structure
and for which $\pi_1(M)$ retracts onto $\pi_1(T)$.  This retraction determines a decomposition $H_1(M,\matZ)=H_1(T,\matZ)\oplus A$, for some finitely generated Abelian group $A$, and the result now 
follows from Lemma \ref{get_trivial}.\qed\\[\baselineskip]
We conclude \S \ref{Lie} with some remarks on arranging the Lie spin structure on multiple cusp cross-sections. As above, $M$ will be a cusped arithmetic hyperbolic manifold of dimension $n$ (of dimension at least $4$) admitting a spin structure with all cusp cross-sections $T_1, T_2, \ldots , T_s$ being Euclidean $(n-1)$-tori. For $d$ a positive integer, let $G_j^d$ be the characteristic subgroup of $\pi_1(T_j)$ arising as the kernel
of the homomorphism $\pi_1(T_j)\rightarrow (\matZ/d\matZ)^{n-1}$.  Standard combination techniques can be used to show that for $d$ sufficiently large, the group $G=G_1^d*G_2^d*\ldots *G_s^d$ is a geometrically finite subgroup of $\pi_1(M)$, and so we can apply \cite{BHW} (see above),
to obtain finite index subgroups $\Gamma < \pi_1(M)$ and $L<G$ with
$\Gamma$ admitting a retraction onto $L$. 

Note that since $[G:L]<\infty$, $L$ contains a finite index subgroup of each of $G_j^d$ for $j=1,\ldots s$, and each such will be isomorphic to $\matZ^{n-1}$ and conjugate into a 
peripheral subgroup of $\Gamma$. Furthermore, by the Kurosh subgroup theorem (see \cite[Chapter IV Theorem 1.10]{LS} for example), $L$ has the form  $F*(*_\alpha A_\alpha)$ where $F$ is a free group and $A_\alpha$ is the intersection of $L$ with a conjugate of some $G_j^d$. Since 
$[G:L]<\infty$ it follows that $A_\alpha\cong \matZ^{n-1}$ for each $\alpha$.  Putting this together we deduce that there is a retraction $\Gamma\rightarrow (\matZ^{n-1})^K$ where each $\matZ^{n-1}$ is a peripheral subgroup of $\Gamma$
and $K$ some large positive integer.

We can now use an extension of Lemma \ref{get_trivial} to get a direct sum decomposition,
$H_1(\Gamma,\matZ) =  (\matZ^{n-1})^K \oplus A$ for some finitely generated Abelian group $A$, and we can follow the proof of Lemma \ref{get_trivial} to obtain the Lie spin structure on a large number of cusp cross-sections of $\matH^n/\Gamma$. 
%Note that $\matH^n/\Gamma$ likely has many more cusps than $M$ and so we cannot arrange the Lie spin structure on {\em all} cusp cross-sections.

\section{The bounding case}
\label{bounding}
In this section we prove Theorem \ref{main:thm}(2), the method of proof being very different from that in \S \ref{Lie}. The core of the argument is a Dehn filling trick.

\subsection{The Dehn filling trick}
%It is more convenient to work in the topological category. As explained in \cite{F, HKK}, the Stiefel -- Whitney classes $w_k(M) \in H^k(M,(\matZ/2\matZ))$ may be defined for any topological manifold $M$, and they coincide with the usual ones (defined in terms of the tangent bundle) when we equip $M$ with a smooth structure. So we say as usual that $M$ is \emph{spinnable} if $w_2(M)=0$.
Let $M$ be a finite-volume hyperbolic $n$-manifold, all of whose cross-sections are $(n-1)$-tori. We say that a closed smooth $n$-manifold $N$ is a \emph{Dehn filling} of $M$ if $N$ contains some disjoint $(n-2)$-tori with trivial normal bundles whose complement is diffeomorphic to $M$. Given $M$, it is possible to construct a Dehn filling $N$ by attaching a copy of $D^2\times S^1 \times \cdots \times S^1$ to each truncated cusp along some diffeomorphism. The resulting $N$ of course depends on the chosen diffeomorphisms.

Here is a crucial observation: if a Dehn filling $N$ has a spin structure, it induces one in $M$ that is of bounding type on every cusp cross-section, because it extends by construction to the adjacent $D^2\times S^1 \times \cdots \times S^1$. The existence of a spin structure on $M$ that restricts to bounding spin structures on every cross-section is in fact equivalent to the existence of a spinnable Dehn filling $N$ of $M$. 

To prove Theorem \ref{main:thm}(2) it therefore suffices to exhibit a hyperbolic $n$-manifold that can be Dehn-filled to a spinnable closed $n$-manifold. We are able to do this for every $n\leq 8$ using right-angled polytopes and a vanishing theorem for Stiefel-Whitney classes of moment-angled manifolds recently discovered in \cite{HKK}.

\subsection{Right-angled polytopes} \label{right:subsection}
Let $P^3, \ldots, P^8$ be the notable sequence of finite-volume right-angled polytopes $P^n \subset \matH^n$ already considered by various authors \cite{ALR, ERT, IMM, PV}. These polytopes are combinatorially dual to the Euclidean \emph{Gosset polytopes} \cite{G} discovered by Gosset in 1900. The only information that we need here is that $P^n \subset \matH^n$ is right-angled and has at least one ideal vertex, its combinatorics will not be important.

Let $\Gamma$ be the reflection group of $P^n$. It follows easily from the standard Coxeter presentation of $\Gamma$ that its abelianisation is isomorphic to the finite group $(\matZ/2\matZ)^f$ where $f$ is the number of facets of $P^n$. It is also a standard fact that the kernel $\Gamma' = [\Gamma,\Gamma]$ of the abelianization contains no torsion and hence $M^n=\matH^n/\Gamma'$ is a cusped finite-volume hyperbolic manifold tessellated by $2^f$ copies of $P^n$. If we use the \emph{colouring} language as in \cite{IMM}, this is the manifold that we get by colouring all the facets of $P^n$ with distinct colours. It also follows from \cite[Proposition 7]{IMM} that all the cusps of $M^n$ are $(n-1)$-dimensional tori. The number of cusps may be very big.\footnote{The polytope $P^8$ has 2160 ideal vertices and 240 facets. The discussion in \cite[Section 1.2]{IMM} implies that $M^8$ has $2160 \cdot 2^{240-14} \sim 10^{71}$ cusps. This is still below the number of atoms in the observable universe, that is around $10^{80}$.} Note that the manifolds $M^3,\ldots, M^8$ defined here are much
larger than the manifolds considered in \cite{IMM}, that are some quotients of these. 

We will prove the following.

\begin{thm} \label{bounding:thm}
The hyperbolic cusped $n$-manifold $M^n$ has a spinnable Dehn filling. Therefore it has a spin structure where every cusp cross-section inherits a bounding spin structure. This holds for every $3\leq n \leq 8$.
\end{thm}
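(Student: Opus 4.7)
The plan is to construct a closed smooth $n$-manifold $N^n$ that is a Dehn filling of $M^n$ and is spinnable. Once such $N^n$ is in hand, the Dehn filling trick from the opening of this section immediately endows $M^n$ with a spin structure whose restriction to every cusp cross-section is of bounding type, which is exactly the assertion of Theorem \ref{bounding:thm}.

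To construct $N^n$, I would first truncate each ideal vertex of $P^n$ by a horosphere, obtaining a compact, combinatorially right-angled, simple polytope $\hat{P}^n$. Because $P^n$ is right-angled with Euclidean $(n-1)$-cubical horospherical vertex figures at ideal vertices, the new truncation facets are $(n-1)$-cubes meeting the original facets of $P^n$ orthogonally. I would then take $N^n$ to be the real moment-angled manifold of $\hat{P}^n$---the closed $n$-manifold obtained as the $(\matZ/2\matZ)^m$-cover of $\hat{P}^n$, where $m$ is its total number of facets and each factor acts by reflection across the corresponding facet---or an intermediate quotient of it by a free action of a sub-deck group. By construction $N^n$ is tessellated by copies of $\hat{P}^n$.

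Next, I would identify $N^n$ as a Dehn filling of $M^n$. Combinatorially, removing the truncation facets from $\hat{P}^n$ recovers $P^n$ with its ideal vertices deleted, and correspondingly one verifies that $N^n$ contains a disjoint family of codimension-$2$ flat tori---one per cusp of $M^n$---whose tubular neighborhoods are copies of $D^2 \times T^{n-2}$ and whose complement is diffeomorphic to $M^n$. This is precisely the structure of a Dehn filling, and it is here that the cubical nature of the horospherical cross-sections is essential: the $(n-1)$-cubical truncation is exactly what allows $\mathcal{R}\mathcal{Z}$-type cores of the form $T^{n-2}$ to arise.

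The crucial remaining step is to prove that $N^n$ is spinnable, and this is where the vanishing theorem of \cite{HKK} enters: it computes the Stiefel--Whitney classes of real moment-angled manifolds in terms of the face lattice of the underlying simple polytope and gives explicit combinatorial criteria on $\hat{P}^n$ under which $w_1(N^n) = w_2(N^n) = 0$. I expect the main obstacle to be the case-by-case verification of these combinatorial criteria for each of $\hat{P}^3,\ldots,\hat{P}^8$. The high symmetry of the Gosset-dual polytopes $P^n$ is what makes the verification feasible in these dimensions; conversely, the end of the Gosset sequence at dimension $8$ is what forces the restriction $n\leq 8$ in the statement.
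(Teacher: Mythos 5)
Your high-level plan (produce a spinnable closed manifold that Dehn-fills $M^n$ via a moment-angle construction, then invoke the vanishing theorem of \cite{HKK}) is the right one, but there is a genuine gap in the construction of the filling, and a misreading of what \cite{HKK} provides.

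The gap: simply truncating the ideal vertices of $P^n$ to get a compact simple polytope $\hat P^n$ and taking its real moment-angle manifold does \emph{not} produce a Dehn filling of $M^n$. After truncation, each ideal vertex contributes an $(n-1)$-cubical facet $C$; in the moment-angle manifold over $\hat P^n$ the new deck reflections act across these cubes, so the preimage of $C$ is a family of $(n-1)$-tori and the resulting closed manifold is a \emph{doubling} of $M^n$ across its cusp cross-sections, not a filling by solid pieces $D^2\times T^{n-2}$. For a genuine Dehn filling you need the cores to be codimension~$2$. The paper's construction achieves this with one more step you are missing: after truncating to get the cube $C$, choose a pair of opposite facets of $C$, foliate $C$ by parallel $(n-2)$-cubes, and collapse $C$ to a single $(n-2)$-cube $F$. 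This produces the abstract simple polytope $\bar P^n$ whose extra strata are $(n-2)$-cubes; in the associated moment-angle manifold $\bar M^n$ the preimage of $F$ is a union of $(n-2)$-tori with $D^2\times T^{n-2}$ neighborhoods, and removing them recovers $M^n$. Dually, this collapse corresponds to subdividing each cross-polytope facet of the Gosset polytope $G^n$ along a chosen diagonal to get a simplicial sphere $K^{n-1}$, so that $\bar M^n \cong \mathbb{R}Z_{K^{n-1}}$.

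Secondly, the theorem of \cite{HKK} is unconditional: if $K$ is any simplicial sphere, \emph{all} Stiefel--Whitney classes of $\mathbb{R}Z_K$ vanish. There is no combinatorial criterion to verify case by case for $n=3,\dots,8$; spinnability is automatic once you know $K^{n-1}$ is a sphere. The restriction to $n\leq 8$ comes solely from the fact that the Gosset family of finite-volume right-angled polytopes with ideal vertices is only available in dimensions $3$ through $8$, not from any difficulty in checking hypotheses of \cite{HKK}.
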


To prove this theorem we need to introduce some tools. We start by describing the Dehn fillings of $M^n$.

\subsection{Dehn fillings of polytopes with ideal vertices}
The following procedure works with every right-angled hyperbolic polytope containing some ideal vertex, but we focus on the poyhedra $P^3,\ldots, P^8$ for simplicity.

Consider $P^n$ as a Euclidean polytope, using the Klein model for $\matH^n$. We now \emph{Dehn fill} $P^n$ to produce a new abstract compact polytope $\bar P^n$, by substituting each ideal vertex of $P^n$ with a $(n-2)$-cube.

\begin{figure}
 \begin{center}
  \includegraphics[width = 12 cm]{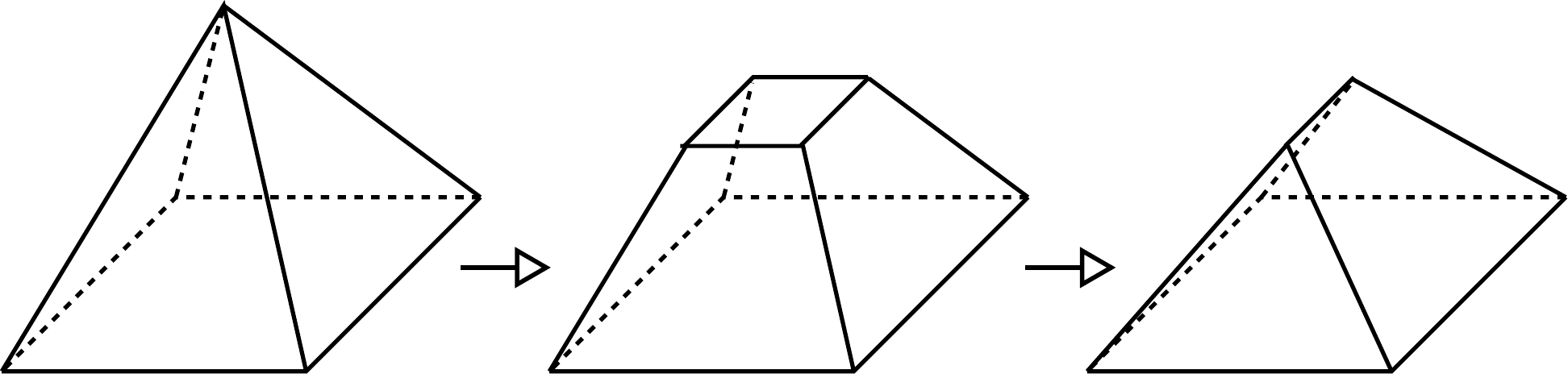}
 \end{center}
 \caption{We Dehn fill $P^n$ by replacing every ideal vertex with a $(n-2)$-cube. Here $n=3$. The resulting (abstract) polytope $\bar P^n$ is simple.}
 \label{Dehn_fill:fig}
\end{figure}

This operation goes as follows. At each ideal vertex $v$ of $P^n$, we first truncate it, thus producing a new small $(n-1)$-cubic facet $C$; then we choose two opposite facets $F_1,F_2$ of $C$, we foliate $C$ with $(n-2)$-cubes parallel to $F_1$ and $F_2$, and we identify all the leaves to a single $(n-2)$-cube $F$. See Figure \ref{Dehn_fill:fig}.

If we perform this operation at every ideal vertex $v$ of $P^n$ we get a topological disc $\bar P^n$ that has the structure of an \emph{abstract simple polytope}, that is its boundary is stratified into faces which intersect minimally, \emph{i.e.}~exactly $k$ of them at each $(n-k)$-stratum (this is a consequence of Remark \ref{Gosset:rem} below). Very often $\bar P^n$ may itself be realized as a polytope in $\matR^n$, but we will not need that. Note that $P^n$ is not simple precisely at the ideal vertices: therefore $\bar P^n$ may be seen as a perturbation of $P^n$ that transforms it into a simple polytope. 

The strata of $\bar P^n$ are the same of $P^n$, except that every ideal vertex $v$ is replaced with a $(n-2)$-cube $F$. The facets of $\bar P^n$ are those of $P^n$, with some additional adjacencies: at every ideal vertex $v$, the two facets of $P^n$ that were opposite with respect to $v$ and contained $F_1$ and $F_2$ are now adjacent in $\bar P^n$, since they intersect in the new $(n-2)$-cube $F$.

Note that at every ideal vertex $v$ there are $n-1$ different Dehn fillings to choose from, one for every pair of opposite facets $F_1,F_2$ in the small $(n-1)$-cube $C$.

\begin{figure}
 \begin{center}
  \includegraphics[width = 9 cm]{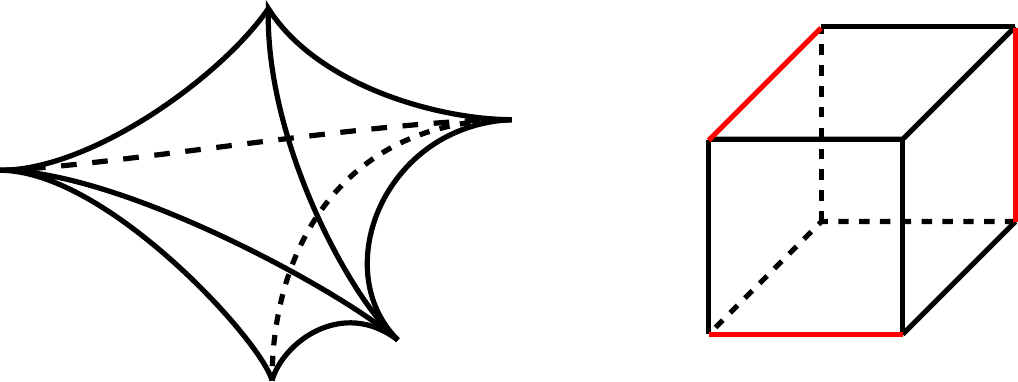}
 \end{center}
 \caption{The polytope $P^3$ is a right-angled bipyramid with three ideal vertices along the horizontal plane and two real ones (top and bottom in the figure). By Dehn filling $P^3$ we may obtain a cube: the three ideal vertices have been substituted with three edges (red in the figure).}
 \label{bipyramid:fig}
\end{figure}

\begin{example}
The polytope $P^3$ is the bipyramid shown in Figure \ref{bipyramid:fig}, with 3 ideal vertices and 2 real ones. 
The Dehn filling $\bar P^3$ is obtained by replacing every ideal vertex with an edge as in Figure \ref{Dehn_fill:fig}. At every ideal vertex there are two possible choices, so there are $2^3=8$ choices overall. In all cases we get an abstract simple polytope with 6 faces. It is possible to get a $\bar P^3$ that is combinatorially a cube as in Figure \ref{bipyramid:fig}.
\end{example}

\begin{rem} \label{Gosset:rem}
We may also describe $\bar P^n$ using dual polytopes. The Gosset polytope dual to $P^n$ is a Euclidean polytope $G^n \subset \matR^n$ with two types of facets: some simplexes, dual to the real vertices of $P^n$, and some cross-polytopes (also called \emph{hyper-octahedra}), dual to the ideal vertices of $P^n$. (Every ideal vertex of $P^n$ has a cubical link, hence the dual facet is a cross-polytope, that is dual to that cube.) 

Let $K^{n-1}$ be the simplicial complex obtained from the boundary of the Gosset polytope $G^n$ by subdividing each cross-polytope facet into $2^{n-1}$ simplexes as follows.
A cross-polytope in $\matR^{n-1}$ is the convex hull of $\pm e_1, \ldots \pm e_{n-1}$. Choose two opposite vertices, say $\pm e_1$ for simplicity, and subdivide the cross-polytope into the simplexes with vertices $e_1, -e_1, \pm e_2, \ldots, \pm e_n$. There are $2^{n-1}$ possible signs, hence $2^{n-1}$ simplexes. When $n=2,3$ this is the standard decomposition of a square into two triangles and of an octahedron into four simplexes. It depends on the choice of two opposite vertices, that is of a diagonal of the cross-polytope.

The stratification of $\partial \bar P^n$ is dual of the simplicial complex $K^{n-1}$. The choice of two opposite facets $F_1, F_2$ at each ideal vertex $v$ for $P^n$ corresponds to the choice of a diagonal in each cross-polytope of $G^n$, and the additional $(n-2)$-cube $F$ in $\bar P^n$ corresponds to the additional diagonal in $K^{n-1}$. Perturbing $P^n$ to the simple polyhedron $\bar P^n$ corresponds dually to subdividing the complex $\partial G^n$ into the simplicial complex $K^{n-1}$.
\end{rem}

We think of $\bar P^n$ as an abstract right-angled compact polytope, that is a topological disc with right-angled corners. In some fortunate cases as in Figure \ref{bipyramid:fig} the polyhedron $\bar P^n$ may indeed be interpreted as a right-angled polyhedron in some geometry ($\bar P^3$ is a Euclidean cube in the figure), but this does not hold in general, and we do not need it. 

We may assign distinct colours to all the facets of $\bar P^n$ and apply the colouring construction to $\bar P^n$ as in \cite{IMM}. The result is a closed topological manifold $\bar M^n$. 

The manifold $\bar M^n$ is naturally a Dehn filling of $M^n$. Using the techniques of \cite{IMM} we see that $\bar M^n$ decomposes into $2^f$ identical copies of $\bar P^n$, and that the pre-image of every additional $(n-2)$-cube $F$  of $\bar P^n$ in $\bar M^n$ consists of many $(n-2)$-tori, each tessellated into $2^{2n-4}$ copies of $F$. Since $\bar P^n$ is obtained from $P^n$ by substituting ideal vertices with $(n-2)$-cubes, the manifold $M^n$ is naturally homeomorphic to $\bar M^n$ minus all the $(n-2)$-tori that are the pre-images of these $(n-2)$-cubes $F$. The closed manifold $\bar M^n$ also inherits a smooth structure from $M^n$.

\begin{example}
We Dehn fill the polyhedron $P^3$ to a cube $\bar P^3$. The filled manifold $\bar M^3$ is a 3-torus tessellated into $2^6$ copies of the cube $\bar P^3$. Therefore $M^3$ is the complement of a 12-components link in the 3-torus $\bar M^3$. The link consists of the pre-images of the red edges in the cube shown in Figure \ref{bipyramid:fig}. In fact the containment of $M^3$ in the 3-cube is a 8-fold covering of the usual description of the Borromean ring complement as a link complement in the 3-torus. So in particular $M^3$ is an 8-fold cover of the Borromean ring complement, and in fact $P^3$ is commensurable with the ideal regular right-angled octahedron.
\end{example}

\subsection{Dehn fillings are spinnable}
To conclude the proof of Theorem \ref{bounding:thm}, and hence of Theorem \ref{main:thm}(2), it remains to show that the Dehn filling $\bar M^n$ constructed above is spinnable. This is an instance of a more general theorem proved recently by Hasui -- Kishimoto -- Kizu in \cite{HKK} in the context of \emph{moment angle manifolds}. This theorem shows in fact that all the Stiefel-Whitney classes of $\bar M^n$ vanish. 

We briefly recall this setting. Let $K$ be a simplicial complex with vertices $\{1,\ldots, m\}$. The \emph{real moment-angle complex} determined by $K$ is 
$$\matR Z_K = \bigcup_{\sigma \in K} \matR Z_\sigma \subset [-1,1]^m$$
where 
$$\matR Z_\sigma = X_1 \times \cdots \times X_m$$
such that $X_i$ equals $[-1,1]$ if $i\in\sigma$ and $\{-1,1\}$ if $i \not \in \sigma$. The symbol $\matR$ is used only to distinguish the real moment-angle complex from its complex version $Z_K$, that we will not use here. 

By construction $\matR Z_K$ is a cube subcomplex of $[-1,1]^m$ that is symmetric with respect to the $m$ reflections along the hyperplanes $x_i = 1/2$, $i=1,\ldots, m$. These symmetries act transitively on the $2^m$ vertices of the cube complex. It is quite easy to check that the link of a vertex in $\matR Z_K$ is isomorphic to the simplicial complex $K$ itself. In particular, if $K$ is homeomorphic to a $(n-1)$-sphere then $M=\matR Z_k$ is a topological $n$-manifold. The manifold $M$ is only topological, but it is noted in \cite{F, HKK} that Stiefel -- Whitney classes need no smooth structure to be defined, and moreover the following holds.

\begin{thm} [Hasui -- Kishimoto -- Kizu] \label{HKK:thm}
If $K$ is a topological sphere, the Stiefel-Whitney classes of the real moment-angle manifold $\matR Z_K$ all vanish.
\end{thm}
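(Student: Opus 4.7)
My plan is to prove the stronger statement that the stable tangent bundle of $\matR Z_K$ is trivial; this immediately gives $w_i(\matR Z_K)=0$ for all $i>0$. Since $K$ is a triangulated sphere, the cube complex $\matR Z_K$ is a closed topological manifold (the link of every vertex of the cube complex is $K$), and Stiefel--Whitney classes are defined topologically, so no smooth structure is needed.

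The first substantive step is to develop a tangent bundle formula in the spirit of Davis--Januszkiewicz's formula for small covers. For each $i=1,\ldots,m$, let $\sigma_i\colon\matR Z_K\to\matR Z_K$ be the involution that negates the $i$-th coordinate, and let $L_i$ be the associated real line bundle obtained from the sign character of $\sigma_i$ along its fixed locus. By analyzing the normal bundles of these codimension-one fixed loci and patching equivariantly, one should be able to prove
\[
T\matR Z_K\,\oplus\,\varepsilon^{m-n}\ \cong\ L_1\oplus\cdots\oplus L_m,
\]
where $n=\dim\matR Z_K$. Granting this, the total Stiefel--Whitney class becomes $w(\matR Z_K)=\prod_{i=1}^m(1+\alpha_i)$ with $\alpha_i=w_1(L_i)\in H^1(\matR Z_K;\matZ/2)$.

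The heart of the argument is to show that every $\alpha_i$ vanishes (equivalently, every $L_i$ is trivial). This is where the hypothesis that $K$ is a sphere enters, and it is the step I expect to be the main obstacle. My approach would be to compute $H^1(\matR Z_K;\matZ/2)$ using the real analogue of Hochster's formula
\[
H^{k}(\matR Z_K;\matZ/2)\ \cong\ \bigoplus_{J\subseteq[m]}\tilde H^{k-1}(K_J;\matZ/2),
\]
where $K_J$ is the full subcomplex of $K$ on the vertex set $J$, and then verify that the cohomology class representing $L_i$ sits in a summand indexed by some $K_J$ whose reduced cohomology in the relevant degree is killed by the sphere hypothesis. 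A cleaner variant would be to produce explicit sections of $L_i$ by exploiting the $(\matZ/2)^m$-equivariance: the fixed locus of $\sigma_i$ is bicollared in $\matR Z_K$ (precisely because $K$ is a sphere and the link is nice), and the two sides of the bicollar are interchanged by $\sigma_i$, which gives a global nowhere-zero section of $L_i$.

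Once every $L_i$ is trivial, the stable tangent bundle is $T\matR Z_K\oplus\varepsilon^{m-n}\cong\varepsilon^m$, so $\matR Z_K$ is stably parallelizable and the conclusion follows. An alternative route I would consider as a backup, if triviality of $L_i$ resists, is to bypass the Davis--Januszkiewicz formula entirely by smoothing the natural cubical embedding $\matR Z_K\hookrightarrow[-1,1]^m\subset\matR^m$ and trivializing the normal bundle directly using the coordinate directions of the ambient cube. There the combinatorial issue is exactly that the "normal" coordinates jump across adjacent top-dimensional cubes; organizing this jump in a globally consistent way should again come down to the sphere hypothesis on $K$.
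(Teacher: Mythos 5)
Note first that the paper does not actually prove Theorem \ref{HKK:thm}: it is quoted from \cite{HKK} and applied as a black box, so there is no in-paper argument to compare with. Judged on its own, your plan---establish a Davis--Januszkiewicz-type stable splitting $T\matR Z_K\oplus\varepsilon^{m-n}\cong L_1\oplus\cdots\oplus L_m$ and then show each $L_i$ is trivial---is a reasonable way to approach the theorem, but there are two concrete problems with how you propose to carry it out.

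You have placed the sphere hypothesis in the wrong spot. The classes $w_1(L_i)$ vanish for \emph{every} simplicial complex $K$: in the Borel fibration $\matR Z_K\to DJ(K)\to B(\matZ/2\matZ)^m$, the generator $v_i\in H^1(DJ(K);\matZ/2\matZ)$ is pulled back from the base, and the composite $\matR Z_K\to B(\matZ/2\matZ)^m$ is nullhomotopic; equivalently, in Hochster's formula the degree-one class $v_i$ is supported on the singleton $J=\{i\}$, and $\tilde H^0(K_{\{i\}})=0$ always. So the step you flag as "the main obstacle" is automatic and uses no hypothesis on $K$ whatsoever. The sphere hypothesis is needed \emph{earlier}: it guarantees that $\matR Z_K$ is a closed topological manifold, so that Stiefel--Whitney classes are defined at all and so that a DJ-type splitting of the tangent (micro)bundle makes sense; proving that splitting for the merely topological manifold $\matR Z_K$ is the real technical content of the theorem, and it is precisely the part you elide with "patching equivariantly\dots one should be able to prove." Your "cleaner variant" also does not work as stated: a bicollar of the fixed locus $S_i$ of $\sigma_i$ whose sides are swapped by $\sigma_i$ shows that $S_i$ is two-sided and that $[S_i]=0$ in $H_{n-1}(\matR Z_K;\matZ/2\matZ)$ (it bounds $\{x_i\ge 0\}$), hence that its Poincar\'e dual $w_1(L_i)$ vanishes---but it does not hand you a "global nowhere-zero section of $L_i$," since $L_i$ is a line bundle over all of $\matR Z_K$ and not merely data near $S_i$. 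The logical route to triviality of $L_i$ is through Poincar\'e duality or the fibration, not through an explicit section.
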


This theorem applies to our Dehn fillings $\bar M^n$ because of the following.

\begin{prop}
Let $K=K^{n-1}$ be the simplicial complex constructed by subdividing the Gosset polytope in Remark \ref{Gosset:rem}. The real moment-angle manifold $\matR Z_K$ is homeomorphic to $\bar M^n$, for every $3\leq n \leq 8$.
\end{prop}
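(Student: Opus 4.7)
The plan is to identify both $\matR Z_K$ and $\bar M^n$ with the same universal ``small cover'' associated to the abstract simple polytope $\bar P^n$, and so deduce the desired homeomorphism. The whole argument is essentially the combination of a standard reformulation of the real moment-angle construction in terms of the dual simple polytope, together with the matching observation that by definition $\bar M^n$ is the colouring manifold of $\bar P^n$ in which all facets receive pairwise distinct colours.

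First I would invoke the well-known reformulation of $\matR Z_L$ for a simple polytope $P$ with $m$ facets $F_1,\dots,F_m$ and dual simplicial complex $L$ on the vertex set $\{1,\dots,m\}$: there is a canonical homeomorphism
$$\matR Z_L \;\cong\; \bigl(P \times (\matZ/2\matZ)^m\bigr)\big/\!\sim,$$
where $(x,g)\sim(x,g')$ iff $g-g'$ lies in the subgroup of $(\matZ/2\matZ)^m$ generated by those basis vectors $e_i$ with $x \in F_i$. This is standard in the theory of polyhedral products (see e.g.\ Buchstaber--Panov), and is verified by observing that the reflection group $(\matZ/2\matZ)^m$ acts on $[-1,1]^m$, each orbit meets the positive orthant $[0,1]^m$ in exactly one point, and the intersection $\matR Z_L \cap [0,1]^m$ is a combinatorial model for $P$ with its facet stratification.

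Second I would match the right hand side above with $\bar M^n$. By construction in Section~\ref{right:subsection}, $\bar M^n$ is obtained from the abstract simple polytope $\bar P^n$ with $f$ facets by the colouring construction of \cite{IMM} in which all facets receive pairwise distinct colours, i.e.\ the standard basis vectors of $(\matZ/2\matZ)^f$. Unpacking that definition yields exactly the quotient $(\bar P^n \times (\matZ/2\matZ)^f)/\!\sim$ displayed above. Finally, by Remark~\ref{Gosset:rem}, $K^{n-1}$ is the simplicial complex dual to $\bar P^n$, so setting $P=\bar P^n$, $L=K^{n-1}$ and $m=f$ in the first step and comparing with the second gives the desired homeomorphism $\matR Z_K \cong \bar M^n$.

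The one subtlety worth flagging is that $\bar P^n$ is in general only an \emph{abstract} simple polytope with right-angled corners, not necessarily realizable as a convex body in $\matR^n$ when $n\geq 4$. The reformulation of the real moment-angle construction used above is however purely combinatorial: it uses only that $\bar P^n$ is simple as a stratified object, equivalently that $K^{n-1}$ is a genuine simplicial complex, which was already established in Section~\ref{right:subsection}. I expect this combinatorial sanity check to be the only point in the argument that is not entirely formal; once it is in place, the proposition reduces to a direct comparison of two standard constructions.
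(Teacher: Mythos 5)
Your argument is correct and is essentially the same as the paper's, differing only in presentation. The paper compares the dual cube complexes (the cube complex $C$ underlying the decomposition of $\bar M^n$ into copies of $\bar P^n$, as in [IMM, Section~2], versus the cube complex structure on $\matR Z_K$) and notes they are isomorphic by construction; you instead pass to the standard quotient model $\matR Z_L \cong \bigl(P\times(\matZ/2\matZ)^m\bigr)/\!\sim$ and match it against the colouring construction defining $\bar M^n$. These are two faces of the same observation, and your version has the minor virtue of being self-contained rather than deferring to [IMM]. Your flagged subtlety about $\bar P^n$ being merely an abstract simple polytope is the right thing to worry about, and you correctly observe that the quotient description only needs the combinatorial face structure (equivalently, that $K^{n-1}$ is an honest simplicial sphere), which the paper establishes in Remark~\ref{Gosset:rem}.
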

\begin{proof}
Recall that $K^{n-1}$ is dual to $\partial \bar P^n$ and that $\bar M^n$ is obtained from $\bar P^n$ by colouring its facets with all distinct colours. The decomposition of $\bar M^n$ into identical copies of $\bar P^n$ is dual to a cube complex $C$ described in \cite[Section 2]{IMM}. The cube complexes $\matR Z_K$ and $C$ are in fact isomorphic by construction.
\end{proof}

By combining these two results we get that the Stiefel -- Whitney classes of $\bar M^n$ vanish, so in particular $\bar M^n$ is spinnable and Theorem \ref{main:thm}(2) is proved.\qed

\section{Final remarks and questions}
\label{final}
Motivated by B\"ar's result Theorem \ref{Bar:thm}, the most interesting question about the nature of the spectrum of the Dirac operator that remains after out work is captured by the following. \\[\baselineskip]
\noindent{\bf Question 1:}~{\em For $n\geq 9$, does there exist examples of cusped orientable hyperbolic $n$-manifolds that have all cusp cross-sections being $(n-1)$-tori, which admits a spin structure that restricts to each cusp cross-section as the bounding spin structure?}\\[\baselineskip]
\noindent As noted in \S \ref{bounding}, this is equivalent to the existence of a spinnable Dehn filling $N$ of the cusped hyperbolic manifold. Thus we pose:\\[\baselineskip]
\noindent{\bf Question 2:}~{\em For each $n\geq 9$, does there exist a cusped orientable hyperbolic $n$-manifold $M$ having all cusp cross-sections being $(n-1)$-tori which admits a Dehn filling $N$ (in the sense of \S \ref{bounding}) a closed $n$-manifold that is spinnable?}

\end{document}